\definecolor{darkgreen}{rgb}{0,0.5,0}
\definecolor{darkred}{rgb}{0.7,0,0}
\theoremstyle{plain}
\newtheorem{lemma}{Lemma}[section]
\newtheorem{thm}[lemma]{Theorem}
\newtheorem{prop}[lemma]{Proposition}
\theoremstyle{definition}
\newtheorem{rmk}[lemma]{Remark}
\numberwithin{equation}{section}
\newcommand{\m}{\ensuremath{{\cal M}}}
\newcommand{\pl}[2]{{\frac{\partial #1}{\partial #2}}}
\newcommand{\pll}[2]{{\frac{\partial^2 #1}{\partial #2^2}}}
\newcommand{\al}{\alpha}
\newcommand{\be}{\beta}
\newcommand{\ga}{\gamma}
\newcommand{\Om}{\Omega}
\newcommand{\la}{\lambda}
\newcommand{\si}{\sigma}
\renewcommand{\th}{\theta}
\newcommand{\vph}{\varphi}
\newcommand{\ep}{\varepsilon}
\newcommand{\R}{\ensuremath{{\mathbb R}}}
\newcommand{\Z}{\ensuremath{{\mathbb Z}}}
\newcommand{\C}{\ensuremath{{\mathbb C}}}
\newcommand{\downto}{\downarrow}
\newcommand{\upto}{\uparrow}
\newcommand{\lap}{\Delta}
\DeclareMathOperator{\Vol}{Vol}
\DeclareMathOperator{\VolEx}{VolEx}
\def\blbox{\quad \vrule height7.5pt width4.17pt depth0pt}
\newcommand{\beq}{\begin{equation}}
\newcommand{\eeq}{\end{equation}}
\newcommand{\beqa}{\begin{equation}\begin{aligned}}
\newcommand{\eeqa}{\end{aligned}\end{equation}}
\newcommand{\brmk}{\begin{rmk}}
\newcommand{\ermk}{\end{rmk}}
\newcommand{\partref}[1]{\hbox{(\csname @roman\endcsname{\ref{#1}})}}
\newcommand{\half}{\frac{1}{2}}
\newcommand{\cmt}[1]{\opt{draft}{\textcolor[rgb]{0.5,0,0}{
$\LHD$ #1 $\RHD$\marginpar{\blbox}}}}
\newcommand{\Ric}{{\mathrm{Ric}}}
\newcommand*\dz{\ensuremath{\mathrm{d}z}}
\title{{
\bf
Uniqueness of Instantaneously Complete Ricci flows
} 
\\ 
\cmt{DRAFT with comments}
}
\author{Peter M. Topping}
\date{7 May 2013} 
\begin{document}

\maketitle
\parskip=10pt

\begin{abstract}
We prove uniqueness of instantaneously complete Ricci flows on surfaces. We do not require any bounds of any form on the curvature or its growth at infinity, nor on the metric or its growth (other than that implied by instantaneous completeness).
Coupled with earlier work, particularly \cite{JEMS, GT2}, this completes the well-posedness theory for instantaneously complete Ricci flows on surfaces.
\end{abstract}

\section{Introduction}
\label{intro}

Hamilton's Ricci flow equation \cite{ham3D}
\beq
\label{RFeq}
\pl{}{t}g(t)=-2\,\Ric[g(t)]
\eeq
for an evolving smooth Riemannian metric $g(t)$ on a manifold $\m$ 
has a short-time existence and uniqueness theory valid for closed $\m$ \cite{ham3D, deturck} and  also for noncompact $\m$ when the initial metric and subsequent flows are taken to be complete and of bounded curvature \cite{shi, chenzhu, kotschwar}. The flow will exist until such time as the curvature blows up.

In this paper we are interested in the case that the domain is two-dimensional (also the simplest setting for K\"ahler Ricci flow) when the results are substantially stronger (e.g. \cite{JEMS, GT2, JMS09} etc.). The first improvement is that the flow can be started with a completely general surface, assumed neither to be complete nor of bounded curvature. It is then possible to flow smoothly forward in time within the class of \emph{instantaneously complete} Ricci flows (i.e. so that $g(t)$ is complete once $t>0$). The second improvement is that the existence time of the flow can be given precisely in terms of the topology and area of the initial surface (normally infinity). In contrast to the higher dimensional theory, the flow will, in general, exist beyond the first time that the curvature blows up \cite{GT4}, and indeed the curvature can be unbounded for all time \cite{GT3}. The third improvement is that the flow enjoys a type of maximality property over all Ricci flows with the same initial surface (\emph{maximally stretched} in the terminology of \cite{GT2}) which already yields a soft uniqueness result. The fourth improvement is that 
in most cases the asymptotics of the flow can be explained in  detail.

In \cite{JEMS} it was proposed that these instantaneously complete Ricci flows should be \emph{unique} (see also \cite[Conjecture 1.5]{GT2}), and partial results in this direction have been given for specific incomplete initial surfaces \cite{JEMS}, for surfaces covered (conformally) by $\C$ \cite{GT2} (exploiting work of Chen \cite{strong_uniqueness}, Yau \cite{Yau73} and Rodriguez, V\'azquez and Esteban \cite{RVE}), for surfaces of uniformly negative curvature (within a restricted class of flows) \cite{GT1} and for flows that are majorised by hyperbolic metrics (within various restricted classes of flows) \cite{GT2, giesen_phd},
in addition to the closed case already covered by Hamilton \cite{ham3D} and the case that the initial metric \emph{and} the flow itself are assumed to be complete and of bounded curvature (\cite[Theorem A.4]{GT2}, or \cite{chenzhu, kotschwar}).
Moreover, Chen \cite[Theorem 3.10]{strong_uniqueness} proves the result under the additional assumptions that the initial metric is complete, of bounded curvature, and non-collapsed.

In this paper we fully settle the uniqueness question for instantaneously complete Ricci flows. They are always unique, without any restriction:

\begin{thm}[Main Theorem]
\label{mainthm}
Let $g(t)$ and $\bar g(t)$ be two instantaneously complete Ricci flows on any surface $\m$, defined for $t\in [0,T]$, with $g(0)=\bar g(0)$. Then $g(t)=\bar g(t)$ for all $t\in [0,T]$.
\end{thm}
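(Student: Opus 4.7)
The approach is to reduce the theorem to a one-sided comparison statement for the logarithmic fast diffusion equation governing the conformal factor, and then combine this with the maximally-stretched construction from \cite{JEMS,GT2}.

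Fix a smooth background metric $g_0$ on $\m$ and write $g(t) = e^{2u(t)}g_0$, $\bar g(t) = e^{2\bar u(t)}g_0$, so that both $u,\bar u$ satisfy
\[
\pl{u}{t} = e^{-2u}\bigl(\lap_{g_0} u - K_{g_0}\bigr),
\]
with the same initial data, and the instantaneous completeness of each flow forces $u(t),\bar u(t)$ to blow up at any end at which $g_0$ is incomplete. The plan then proceeds in two steps. First, the maximally-stretched flow $g_{\max}(t)$ of \cite{JEMS,GT2} satisfies $g_{\max}(t) \geqs \tilde g(t)$ for every smooth Ricci flow $\tilde g(t)$ with $\tilde g(0) \leqs g_0$; applied to $\tilde g = g$ and $\tilde g = \bar g$ this gives $g(t),\bar g(t) \leqs g_{\max}(t)$. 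Second, one must establish the reverse inequality. Since $g_{\max}$ is constructed as a monotone limit of Ricci flows $g_n$ starting from complete, bounded-curvature approximations $g_{0,n} \nearrow g_0$, it suffices to show that any instantaneously complete flow $g(t)$ satisfies $g(t) \geqs g_n(t)$ for each $n$, from which $g=\bar g=g_{\max}$ follows by letting $n\to\infty$.

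The core of the paper is therefore the one-sided comparison: a complete, bounded-curvature Ricci flow with smaller initial data must stay smaller than any instantaneously complete Ricci flow. I would attack this by a maximum-principle argument applied to the difference $w := u_n - u$. The bounded curvature of $g_n$ furnishes uniform control of $\lap u_n$ and $\pl{u_n}{t}$ on any compact region, while the instantaneous completeness of $g(t)$ provides the crucial barrier at the ends of $\m$: since $u(t) \to +\infty$ along any incomplete end of $g_0$ whereas $u_n$ is globally smooth there, $w$ stays bounded above near such ends, so no positive supremum of $w$ can escape to infinity in that direction. Exhausting $\m$ by the closed $g(t)$-balls --- which are relatively compact since $g(t)$ is complete --- and using cutoffs built from the $g(t)$-distance function, one should reduce matters to a standard interior parabolic maximum principle.

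The main technical obstacle, which I expect will absorb most of the effort, is the unavoidable degeneracy of the operator $e^{-2u}\lap$ where $u$ is large --- precisely at the ends of $\m$ where no a priori estimates on $u$ are available. Chen's comparison principle \cite{strong_uniqueness} and the methods of \cite{GT2} sidestep this by imposing auxiliary bounded-curvature or hyperbolic-majorant assumptions; here one must confront it head-on. The two-dimensional lower bound $R[g(t)] \geqs -\tfrac{1}{2t}$ (which yields a one-sided control on $\pl{u}{t}$), upper bounds on $u$ coming from the evolution of scalar curvature, and a carefully chosen time-dependent cutoff that is simultaneously honest with respect to both flows, should together be enough to close the argument. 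Whether these ingredients suffice in full generality, or whether a genuinely new estimate is needed, is the crux of the problem.
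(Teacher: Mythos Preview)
Your overall reduction is correct and agrees with the paper: one may take $\bar g = g_{\max}$ to be the maximally-stretched flow from \cite{GT2}, so that $g(t)\leq g_{\max}(t)$ for free, and the whole content of the theorem is the reverse inequality.

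The proposed maximum-principle attack on that reverse inequality, however, has a genuine gap. You assert that along an end where the background is incomplete, $u(t)\to+\infty$ while ``$u_n$ is globally smooth there,'' so that $w=u_n-u$ is bounded above near such ends. But the approximating flows $g_n(t)$ are themselves \emph{complete} (in the construction of \cite{GT2} the initial data $g_{0,n}$ are complete, being essentially the maximum of $g_0$ with a small multiple of the hyperbolic metric), so $u_n(t)$ blows up at those ends as well. The difference $w$ is an indeterminate $\infty-\infty$ near the boundary, and no barrier is available. This is exactly the obstruction: both competitors are complete, one has no a priori comparison of their rates of divergence at infinity, and a pointwise maximum principle cannot distinguish them. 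Your final paragraph correctly senses that this is the crux, but the ingredients you list (Chen's lower curvature bound, time-dependent cutoffs honest for both flows) do not close it.

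The paper takes an entirely different route, abandoning the maximum principle in favour of an \emph{integral} estimate. After lifting to the universal cover and disposing of the $S^2$ and $\C$ cases, one works on the disc $D$ and considers the weighted area difference
\[
J(t)=\int_{D_R}\vph\,d\mu_{\bar g(t)}-\int_{D_R}\vph\,d\mu_{g(t)}\geq 0,
\]
for a radial cutoff $\vph$ supported in $D_R\subset D$ and equal to $1$ on a smaller disc $D_{r_0}$. Differentiating in $t$, integrating by parts in the logarithmic fast-diffusion equation, and applying H\"older's inequality gives
\[
\frac{d}{dt}J^{\frac{1}{1+\ga}}\leq C(\ga)\,t^{-\frac{\ga}{1+\ga}}\,Q^{\frac{1}{1+\ga}},
\qquad
Q=\int s^{2\ga}|\vph''|^{1+\ga}\vph^{-\ga}\,ds,
\]
where the power of $t$ comes from the big-bang lower barrier $g(t)\geq 2t\,g_{\mathrm{poin}}$ (this is where Chen's estimate is actually used, via \cite[Lemma~2.1]{GT2}). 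The heart of the paper is the explicit construction of a ``flux function'' $\vph$ for which $Q\to 0$ as $R\to 1$; integrating then forces $J\equiv 0$, hence $g=\bar g$ on $D_{r_0}$, hence everywhere. The method is in the spirit of Rodriguez--V\'azquez--Esteban \cite{RVE}, but the design of $\vph$ adapted to the hyperbolic end of the disc (rather than the Euclidean end of $\C$) is the new ingredient.
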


Coupled with earlier existence theory, particularly that of Giesen and the author \cite{GT2}, we then have:

\begin{thm}[Incorporating \cite{GT2}]
\label{fullthm}
Let $(\m^2,g_0)$ be any smooth Riemannian surface, possibly incomplete and/or with unbounded curvature. 
Depending on the conformal type, we define $T\in(0,\infty]$ 
by\footnote{Note that in the case that $\m=\mathbb C$, we set $T=\infty$ if $\Vol_{g_0}\mathbb C=\infty$. Here $\chi(\m)$ is the Euler characteristic.}
  \[ T := \begin{cases}
    \frac1{4\pi\chi(\m)}\Vol_{g_0}\m & \text{if }(\m,g_0)\cong\mathcal S^2, \mathbb C\text{ or }\mathbb R\!P^2, \\
    \qquad\infty & \text{otherwise}.
  \end{cases} \]
Then there exists a {\bf unique} smooth Ricci flow $g(t)$ on $\m^2$, defined for $t\in[0,T)$ such that 
  \begin{compactenum} 
  \item $g(0)=g_0$;
  \item $g(t)$ is instantaneously complete.
  \end{compactenum}
In addition, this Ricci flow $g(t)$ is maximally stretched (see Remark \ref{max_stretch}).
If $T<\infty$, then we have 
\[
\Vol_{g(t)}\m = 4\pi\chi(\m) (T-t)\longrightarrow\quad 0 \quad\text{ as } t\nearrow T, \] 
and in particular, $T$ is the maximal existence time.
\end{thm}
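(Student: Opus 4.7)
The plan is to identify both $g(t)$ and $\bar g(t)$ with the canonical maximally stretched instantaneously complete Ricci flow $G(t)$ produced by the existence theory of \cite{GT2}. By the defining property of the maximal stretch, the inequalities $G(t) \geqs g(t)$ and $G(t) \geqs \bar g(t)$ hold as metric inequalities for free; hence the entire content of the theorem reduces to showing that \emph{every} instantaneously complete Ricci flow starting at $g_0$ dominates $G(t)$ pointwise. Once this is established for both $g$ and $\bar g$, each equals $G$, and the conclusion follows.

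To recast this as a PDE comparison problem, I would fix a smooth background metric $h$ on $\m$ and write $g(t) = e^{2u(t)}h$, $G(t) = e^{2U(t)}h$. The two-dimensional Ricci flow equation then reduces to the scalar parabolic equation $\dop_t u = e^{-2u}(\lap_h u - K_h)$, and similarly for $U$, so the goal becomes the pointwise comparison $u \geqs U$. Working locally, I would choose a compact exhaustion $\{K_j\}$ of $\m$, pick small times $0 < s \leqs \tau \leqs T$, and try to run a maximum-principle argument for the difference $U - u$ on each cylinder $K_j \times [s,\tau]$. Passing $j \to \infty$ and then $s \downto 0$ would give the desired global inequality, provided the parabolic boundary contributions can be controlled.

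The crux of the matter is controlling $U - u$ on the parabolic boundary of these cylinders. Since both $e^{2u}h$ and $e^{2U}h$ are instantaneously complete, the conformal factors $u(t)$ and $U(t)$ cannot be too small along the topological ends of $\m$ (measured relative to $h$), and this completeness constraint is the only universally available ingredient to handle the lateral boundary $\dop K_j$. On the initial slice $t = s$ both factors are smooth, and one needs a continuity property as $s \downto 0$ compatible with the common initial datum $g_0$. The natural route is to build, for each $j$, a strict supersolution to a linearisation of the difference equation by combining cut-off functions with the lower bounds forced by completeness, much in the spirit of the barrier constructions used in \cite{GT2}.

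The principal obstacle is that no a priori upper bound on the curvature, nor any regularity on $g_0$ beyond smoothness, is assumed, so the standard bounded-curvature uniqueness arguments (such as Chen's \cite{strong_uniqueness}) do not apply directly. A natural simplification is to pass to the universal cover, so that $\m$ is replaced by one of $\C$, $\mathbb D$, or $\s^2$; the cases $\C$ and $\s^2$ are already understood via \cite{GT2} and earlier work of Chen, Yau and Rodriguez--V\'azquez--Esteban, leaving as the genuinely new input a uniqueness theorem on the disk for the logarithmic fast-diffusion equation $\dop_t u = e^{-2u}\lap u$ subject to the completeness condition forcing $u(t,\cdot)$ to blow up suitably at $\dop \mathbb D$ for each $t > 0$. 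Producing such a uniqueness statement using only instantaneous completeness, and no growth or curvature hypothesis whatsoever, is where I would expect the bulk of the technical effort to lie.
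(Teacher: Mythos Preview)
Your outer reduction is exactly right and matches the paper: take $\bar g(t)$ to be the maximally stretched flow from \cite{GT2}, so that $g(t)\leq \bar g(t)$ is free; lift to the universal cover; dispose of $S^2$ and $\C$ by existing results; and reduce everything to a uniqueness statement on the disc $D$ for the logarithmic fast diffusion equation under the sole hypothesis of instantaneous completeness. You have also correctly located the genuine difficulty.

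Where your proposal diverges from the paper, and where I think it would stall, is the mechanism you suggest for the disc case. You propose a \emph{pointwise} maximum-principle argument for $U-u$ on an exhaustion $K_j\times[s,\tau]$, with the lateral boundary handled by ``barrier constructions'' built from the completeness lower bounds. The trouble is that completeness only forces $u(t,\cdot)$ and $U(t,\cdot)$ to blow up at $\partial D$ for $t>0$; it gives no control on their \emph{difference} at the lateral boundary, and with no curvature bound the linearised operator has an uncontrolled coefficient $e^{-2u}$. There is no evident barrier that closes this without further hypotheses, and indeed the earlier partial results you cite all imposed some extra structure precisely to make a pointwise argument work.

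The paper sidesteps this entirely by working with \emph{integrated} quantities rather than pointwise ones. Writing $g(t)=U|dz|^2$ and $\bar g(t)=V|dz|^2$ with $U\leq V$, one studies the weighted area difference
\[
J(t)=\int_{D_R}\varphi\,(d\mu_{\bar g(t)}-d\mu_{g(t)})=\int (V-U)\varphi,
\]
for a carefully designed radial cut-off $\varphi$ supported in $D_R$ and equal to $1$ on $D_{r_0}$. Differentiating and integrating by parts gives $J'(t)=\int(\log V-\log U)\varphi''$; the inequality $\log(1+x)\leq C x^{\gamma/(1+\gamma)}$ together with H\"older and the \emph{lower} bound $U\geq 2tH$ (the big-bang barrier coming from completeness, via Chen and Yau) yields a closed differential inequality $\tfrac{d}{dt}J^{1/(1+\gamma)}\leq C(\gamma)\,t^{-\gamma/(1+\gamma)}Q^{1/(1+\gamma)}$. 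The entire analytic content is then the construction of a specific ``flux'' cut-off $\varphi$ (a rescaling of $f(\sigma)=\frac{1}{-\log a}[\sigma(\log\sigma-\log a)-(\sigma-a)]$) for which the error term $Q$ tends to $0$ as $R\nearrow 1$. Sending $R\to 1$ forces $J\equiv 0$, hence $\Vol_{\bar g(t)}D_{r_0}=\Vol_{g(t)}D_{r_0}$, hence $g=\bar g$.

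So: your scaffolding is correct, but the load-bearing step is an interior area estimate with a bespoke cut-off, not a maximum principle with barriers. The completeness hypothesis enters only through the big-bang lower bound $U\geq 2tH$, used inside an integral, which is why no growth or curvature assumption is needed.
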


\begin{rmk}
\label{max_stretch}
Recall from \cite{GT2} and \cite[Remark 1.5]{GT4} that the \emph{maximally stretched}  assertion means that $g(t)$ lies `above' any another Ricci flow with the same or lower initial data.
More precisely,
if $0\leq a<b\leq T$ and $\tilde g(t)$ is any Ricci flow on $\m$ for $t\in [a,b)$ with $\tilde g(a)\leq g(a)$ (with $\tilde g(t)$ not necessarily complete or of bounded curvature) then $\tilde g(t)\leq g(t)$ for every $t\in[a,b)$. 
\end{rmk}

If one relaxes the notion in which the Ricci flow should attain its initial data, then non-uniqueness \emph{can} occur \cite{revcusp}.

One can consider Theorem \ref{fullthm} also as the well-posedness theorem for the logarithmic fast diffusion equation, a topic that has been studied extensively (e.g. \cite{DdP95, DD},
\cite[\S 3.2]{DK} and \cite[\S 8.2]{vaz}).
Ricci flow on surfaces $\m$ preserves the conformal class of the metric and can be written
$$\pl{}{t}g(t)=-2K. g(t),$$
where $K$ is the Gauss curvature of $g(t)$.
In this case, we may take local isothermal coordinates
$x$ and $y$, and write the flow 
$g(t)=e^{2u}|\dz|^2:=e^{2u}(dx^2+dy^2)$
for some locally-defined scalar time-dependent function $u$ which will then satisfy the local equation
\beq
\label{2DRFeq}
\pl{u}{t}=e^{-2u}\lap u = -K.
\eeq
The area density $U:=e^{2u}$ then satisfies
\beq
\label{2DRFeq2}
\pl{U}{t}=\lap \log U.
\eeq
This equation has been heavily studied in the case that the domain is $\R^2$ (e.g. \cite{DdP95, DD, vaz, DK})
with good existence theory, but troublesome uniqueness. This paper demonstrates that instantaneous completeness is the right substitute for a growth (or other) condition in this and all other cases.
We note that the well-known result of 
Rodriguez, V\'azquez and Esteban \cite{RVE} that in the special case of the domain being $\R^2$, the maximally stretched solution is uniquely characterised by the growth condition 
\begin{equation}
\label{growth_cond}
\frac{1}{U}\leq C\left(\frac{r^2(\log r)^2}{2t}\right)\quad
\text {for some constant } C<\infty,
\end{equation}
is subsumed in the statement of Theorem \ref{mainthm}, as it was in \cite{GT2} (see \cite[\S 3.2]{GT2}, where the ideas of \cite{RVE} were combined in particular with Yau's Schwarz lemma \cite{Yau73}). The conformal factor $2t/(r\log r)^2$ here is representing an expanding hyperbolic cusp, and is therefore very natural, although it could be weakened further using 
Theorem \ref{mainthm}.

Although Theorem \ref{fullthm} handles all surfaces as possible initial data, it may be instructive to consider the special case that the initial surface is the flat two-dimensional disc $(D,g_0)$, which corresponds to the initial conformal factor $u\equiv 0$. It is a spurious feature of this example that it has zero curvature, so one Ricci flow continuation would be the static solution. However, even amongst solutions $u$ to \eqref{2DRFeq} that are continuous up to the boundary $\partial D$, the standard theory of parabolic equations tells us that we are free to prescribe $u$ on 
$\partial D$ (that then giving uniqueness in this restricted class).
Theorem \ref{fullthm} tells us that there is another way of flowing, where we send in an infinite amount of area at spatial infinity to make the metric immediately complete. The flow will be smooth all the way down to $t=0$, so for small time the flow will look a little like a flat disc on the interior.
However, in a boundary layer, the metric will look like the Poincar\'e metric on the disc, scaled to have curvature $-\frac{1}{2t}$.
By exploiting some deep ideas of Perelman \cite[\S 10]{P1} it can be shown that the thickness of the layer where the flow is far from the flat disc is controlled by $\sqrt{t}$.

Theorem \ref{mainthm} implies in particular that there is only one way we can send in area from infinity in order to make the surface complete. If we try to send in less, then the metric will not be complete. If we try to send in more, then the nonlinearity acts as a damping mechanism, and nothing reaches the interior.
Theorem \ref{mainthm} implies that these effects are exactly 
complementary.

\section{Applications of Uniqueness}

The uniqueness of Theorem \ref{mainthm} serves to finish the well-posedness theory for this equation, and highlights the Ricci flow constructed in \cite{GT2} as the natural one to study both in general and as a tool to understand spectral and dynamical properties of surfaces (cf. \cite{AAR} for example) and their uniformisation. As a by-product, it also immediately implies that in the special case that the initial surface is complete and of bounded curvature, then the Ricci flow from Theorem \ref{fullthm} coincides with the now-standard Ricci flow constructed by Shi \cite{shi}, 
as originally proved in \cite[Theorem 1.8]{GT2}
(although Shi's flow will not exist for as long \cite{GT4}).

One application of uniqueness is a comparison principle that can allow us to compare Ricci flows whose behaviour on the `boundary at infinity' is unknown.
For example, whenever we have an instantaneously complete Ricci flow $g(t)$, perhaps a Ricci soliton flow that we have written down explicitly, maybe with unbounded curvature, then it must be maximally stretched by Theorem \ref{fullthm} (as a result of Theorem \ref{mainthm}), and all other Ricci flows starting at or below $g(0)$, possibly incomplete and/or with unbounded curvature, must lie below $g(t)$, however they behave `at infinity'.

A further application is that 
it is often useful to know that an instantaneously complete Ricci flow that has arisen in a construction (for example as a limit of other Ricci flows) is the flow that would have arisen from the existence theory using the same initial metric, and Theorem \ref{mainthm} provides this for us.
For example, the argument in \cite{HCT} would have been simplified 
by knowing that a Ricci flow of possibly unbounded curvature that starts at a complete soliton metric is necessarily the corresponding soliton flow while it is known to be complete. Theorem \ref{mainthm} guarantees that it is.

Finally we remark that the estimates from this paper imply stability results for two-dimensional Ricci flow.

\section{Proof of the uniqueness of Theorem \ref{mainthm}}

We are free to assume that the Ricci flow $\bar g(t)$ of Theorem \ref{mainthm} is a maximally stretched solution arising from \cite[Theorem 1.3]{GT2} (i.e. from the existence part of Theorem \ref{fullthm}) and that $g(t)$ is any other instantaneously complete Ricci flow with $g(0)=\bar g(0)$.
By the Uniformisation theorem, the universal cover of $\m$ must be conformally equivalent to $S^2$, $\C$ or $D$, and it suffices to prove that the lifts of $g(t)$ and $\bar g(t)$ to the universal cover agree. In the case that the universal cover is $S^2$, then we are in the closed case that was dealt with originally by Hamilton \cite{ham3D} (see also \cite{formations}). In the case that the universal cover is $\C$, Giesen and the author already proved the result \cite[\S 3.2]{GT2} based on an estimate of Rodriguez, V\'azquez and Esteban \cite{RVE}. It therefore suffices to assume that $\m=D$, the two-dimensional disc.

Since we know that $g(t)\leq \bar g(t)$ by the maximally stretched property, and $g(0)=\bar g(0)$, we need only show that relative to $g(t)$, the flow $\bar g(t)$ cannot \emph{lift off} as time increases. The key result is the following quantitative estimate that 
considers the area measured with respect to $g(t)$ of a smaller (concentric) disc 
$D_r:=\{z\in\C\ :\ |z|<r\}\subset D$, and constrains how much larger it could be if we were allowed to make the metric arbitrarily larger on the boundary of a larger disc $D_R\subset D$, where
$r<R<1$.

\begin{lemma}[Interior area estimate]
\label{keylemma}
Fix $r_0\in (0, 1)$ and $\gamma\in (0,\half)$. Then for 
$R\in (r_0,1)$ sufficiently close to $1$, depending on $r_0$, the following holds.
Suppose $g(t)$ is any instantaneously complete Ricci flow on $D$, for $t\in [0,T]$, and $G(t)$ is any Ricci flow on $\overline {D_R}$ with the properties that $G(0)=g(0)$ and $G(t)\geq g(t)$
on $\overline {D_R}$ for $t\in [0,T]$.
Then for all $t\in [0,T]$, we have
\begin{equation}
\label{mainest}
\Vol_{g(t)}D_{r_0}\leq \Vol_{G(t)}D_{r_0}\leq \Vol_{g(t)}D_{r_0}+\frac{C(\ga,r_0)t}{[-\log(-\log R)]^\ga}.
\end{equation}
\end{lemma}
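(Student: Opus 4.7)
The plan is to pass to isothermal coordinates on $D$, writing $g(t)=e^{2u}|\dz|^2$ on $D$ and $G(t)=e^{2U}|\dz|^2$ on $\overline{D_R}$. The hypothesis $G\geq g$ becomes $U\geq u$, with equality at $t=0$, and both factors satisfy \eqref{2DRFeq}; equivalently the area densities $V:=e^{2u}$, $W:=e^{2U}$ satisfy $V_t=\Delta\log V$, $W_t=\Delta\log W$. The left-hand inequality of \eqref{mainest} is immediate; I focus on the right-hand one.

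\textbf{Step 1 (weak area identity).} Let $\psi:\overline{D_R}\to[0,1]$ be a smooth radial cutoff with $\psi\equiv 1$ on $\overline{D_{r_0}}$ and $\psi$ compactly supported in $D_R$. Pairing $W_t-V_t=\Delta(\log W-\log V)$ with $\psi$ and integrating by parts twice (boundary terms vanishing since $\psi,\nabla\psi\to 0$ near $\partial D_R$),
\[
\frac{d}{dt}\int_{D_R}\psi\,(W-V)\,\dx\dy \;=\; 2\int_{D_R}\Delta\psi\,(U-u)\,\dx\dy.
\]
Using $W\geq V$, $\psi\equiv 1$ on $D_{r_0}$, and $U(0)\equiv u(0)$,
\[
\Vol_{G(t)}D_{r_0}-\Vol_{g(t)}D_{r_0} \;\leq\; 2\int_0^t\!\!\int_{D_R}\Delta\psi\,(U-u)\,\dx\dy\,ds.
\]
It therefore suffices to control $U-u$ pointwise on $\operatorname{supp}\Delta\psi\subset\{r_0<|z|<R\}$, uniformly over all admissible $G$, by a quantity tending to $0$ as $R\to 1$.

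\textbf{Step 2 (lower bound on $u$ from completeness).} Instantaneous completeness of $g(t)$ on $D$ forces $u$ to be large near $\partial D$. Comparison with the shrinking hyperbolic cusp on $D$ of curvature $-\tfrac{1}{2t}$ (via the maximally-stretched framework of \cite{GT2}) yields an explicit cusp-type lower bound
\[
e^{2u(t,z)}\;\geq\;\frac{c\,t}{(1-|z|^2)^2}\qquad((t,z)\in(0,T]\times D).
\]

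\textbf{Step 3 (upper barrier for $U$, the crux).} On the closed annulus $A:=\{r_0\leq|z|\leq R\}$ I aim to construct a radial super-solution $\bar u(|z|,t)$ of \eqref{2DRFeq} satisfying (i) $\bar u(\cdot,0)\geq u(0,\cdot)$ on $A$, attainable by adding a large additive constant since $u(0,\cdot)$ is bounded on the compact set $A$; (ii) $\bar u(r,t)\to+\infty$ fast enough as $r\to R^-$ that the parabolic comparison principle for \eqref{2DRFeq} yields $U\leq\bar u$ on $A$ \emph{without using any upper bound on} $U|_{\partial D_R}$; and (iii) the quantitative control $\bar u(r,t)-u_{\mathrm{cusp}}(t,r)\leq C(\gamma,r_0)\,[-\log(-\log R)]^{-\gamma}$ on $\operatorname{supp}\Delta\psi$, where $u_{\mathrm{cusp}}$ is the explicit profile from Step 2. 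A natural ansatz is a time-rescaled cusp $\bar u(r,t)=\log\frac{\lambda(t)}{\Phi(r)}$ with $\Phi(R)=0$, so that the degeneracy $e^{-2\bar u}=\Phi^2/\lambda^2$ tames the $e^{-2\bar u}\Delta\bar u$ term; tuning $\Phi$ and $\lambda$ against the super-solution inequality and the blow-up condition at $r=R$ generates the double-logarithmic rate, the threshold $\gamma<\half$ reflecting the critical integrability of the cusp profile.

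\textbf{Step 4 (conclusion).} Combining (ii)--(iii) of Step 3 with Step 2 gives $U-u\leq\bar u-u_{\mathrm{cusp}}\leq C(\gamma,r_0)\,[-\log(-\log R)]^{-\gamma}$ pointwise on $\operatorname{supp}\Delta\psi$. Substituting into Step 1 and absorbing $\int|\Delta\psi|\,\dx\dy$ into the constant yields \eqref{mainest}.

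\textbf{Main obstacle.} Step 3 is the decisive step. The central difficulty is that we have no a-priori control on $U|_{\partial D_R}$, forcing the barrier to blow up at $\partial D_R$ at a precisely-tuned rate while still dominating $U$ via a maximum principle compatible with a $+\infty$ parabolic boundary condition --- this is where the degeneracy of \eqref{2DRFeq} (the coefficient $e^{-2U}$ being small precisely when $U$ is large) is pivotal, preventing arbitrarily large values on $\partial D_R$ from propagating into the interior in finite time. Attaining the sharp exponent $\gamma<\half$ requires a delicate ODE analysis of the optimal radial profile, which falls just short of the endpoint $\gamma=\half$.
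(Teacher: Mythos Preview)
Your Step 1 and Step 2 are correct and match the paper exactly: the weighted-area identity
\[
\frac{d}{dt}\int_{D_R}\psi\,(W-V)\,\dx\dy \;=\; \int_{D_R}\Delta\psi\,(\log W-\log V)\,\dx\dy
\]
and the big-bang lower bound $e^{2u}\geq 2tH$ (with $H$ the hyperbolic conformal factor on $D$) are precisely the starting points of the paper's argument.

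The gap is Step~3. A pointwise upper barrier for $U$ cannot deliver the estimate, for two reasons. First, any supersolution $\bar u$ on $D_R$ must satisfy $\bar u(\cdot,0)\geq u(0,\cdot)$, and $u(0,\cdot)$ is an \emph{arbitrary} smooth function on the compact set $\overline{D_R}$; the additive constant you need therefore depends on $g(0)$, whereas the lemma asserts a constant $C(\gamma,r_0)$ independent of the initial data. Second, and more fatally, the gap $\bar u-u_{\mathrm{cusp}}$ does not tend to zero as $R\to 1$. For the natural barrier (the big-bang flow on $D_R$ shifted by $t_0$ so as to dominate $u(0)$) one gets
\[
\bar u(r,t)-u_{\mathrm{cusp}}(r,t)=\tfrac12\log\frac{t+t_0}{t}+\log\frac{H_{D_R}(r)}{H_D(r)},
\]
and while the second term vanishes as $R\to 1$, the first stays bounded away from zero for fixed $t$, and its time integral over $[0,t]$ diverges. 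No refinement of the barrier can repair this: since $e^{-2\bar u}$ is small when $\bar u$ is large, supersolutions starting above fixed data $u(0)$ relax only slowly and cannot approach the cusp profile (which starts at $-\infty$) uniformly on any subregion. In short, the difference $\log W-\log V$ simply admits no useful pointwise bound independent of $G$.

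The paper sidesteps this entirely: rather than bounding $\log W-\log V$ pointwise, it uses the elementary inequality $\log(1+x)\leq\frac{1}{\la}x^{\la}$ with $\la=\frac{\ga}{1+\ga}$ and then H\"older's inequality to write
\[
\frac{dJ}{dt}\leq C(\ga)\,J^{\frac{\ga}{1+\ga}}
\left(\int \big(e^{2u}\big)^{-\ga}\,|\psi''|^{1+\ga}\psi^{-\ga}\right)^{\frac{1}{1+\ga}},
\]
so that the unknown $W$ has been absorbed back into $J$ itself. Only the \emph{lower} solution's conformal factor $e^{2u}$ now appears, and it is controlled from below by Step~2. This yields a closed differential inequality $\frac{d}{dt}J^{1/(1+\ga)}\leq C(\ga)t^{-\ga/(1+\ga)}Q^{1/(1+\ga)}$, and the remaining work is to design the cutoff $\psi$ (a carefully chosen ``flux function'' in logarithmic polar coordinates, not a generic bump) so that $Q\leq C(\ga)(-\log r_0)^{-1}[-\log(-\log R)+\log(-\log r_0)]^{-\ga}$; the constraint $\ga<\half$ enters in the integrability of this cutoff term, not in any barrier ODE.
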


In practice we will take $G(t)$ to be the restriction to $\overline{D_R}$ of the maximally stretched Ricci flow $\bar g(t)$ on the entire disc $D$ with $\bar g(0)=g(0)$. In that case, if we fix $r_0$ and $\ga$ as in the lemma, and apply the lemma for larger and larger $R\in (r_0,1)$ we obtain in the limit that
$$\Vol_{\bar g(t)}D_{r_0}= \Vol_{g(t)}D_{r_0},$$
from which we deduce that $g(t)=\bar g(t)$ on $D_{r_0}$ because $\bar g(t)\geq g(t)$. By taking the limit $r_0\nearrow 1$, we find that $g(t)=\bar g(t)$ throughout $D$
for all $t\in [0,T]$, which completes the proof of Theorem \ref{mainthm}, modulo Lemma \ref{keylemma}.

The rest of the paper is devoted to proving Lemma \ref{keylemma}. Indeed we will prove a slightly stronger statement that we give in Lemma \ref{stronger_lemma}. The basic technique of the proof of these estimates is to define a weighted area consisting of the integral of a certain cut-off function $\vph:D\to [0,1]$ with support in $D_R$, and identically equal to $1$ on $D_{r_0}$, and control the evolution of the difference of the weighted areas defined with respect to $g(t)$ and $G(t)$, in a manner reminiscent of the work of 
Rodriguez, V\'azquez and Esteban \cite{RVE} mentioned above.
Great care is required to choose the cut-off $\vph$ in order to measure the right flux of area in our case.


\section{Proof of the interior area estimate, Lemma \ref{keylemma}}
\label{proof_interior}

We begin the proof working under the hypotheses of Lemma \ref{keylemma},
\emph{except} that we do not assume for the moment that $G(0)=g(0)$.

In order to respect the geometry of the situation, and dramatically reduce the number of logarithms in computations, we will view the (punctured) disc conformally as a half-cylinder 
$(0,\infty)\times S^1$, and take (isothermal) logarithmic polar coordinates $(s,\th)$ instead of the standard Cartesian isothermal coordinates $(x,y)$, where $s=-\log r=-\half \log (x^2+y^2)$,
i.e. $x+iy=e^{-(s+i\th)}$ for $\th\in S^1=\R/(2\pi\Z)$.

In these coordinates, the conformal factor of the complete hyperbolic metric $g_{poin}=H(ds^2+d\th^2)$ on 
$D$ is given by 
\begin{equation}
\label{Hdef}
H(s)=\frac{1}{\sinh^2 s}
\end{equation}
for $s>0$.
The so-called \emph{big-bang} Ricci flow is then given by
$2t\,g_{poin}$ for $t>0$.
We will also use these coordinates to write the Ricci flows 
$g(t)=U(ds^2+d\th^2)$ and $G(t)=V(ds^2+d\th^2)$ for $s>0$ and 
$s>S:=-\log R$ respectively, where the scalar functions $U$ and $V$ will then satisfy \eqref{2DRFeq2}. By hypothesis, we have $U\leq V$.
Moreover, from \cite[Lemma 2.1(i)]{GT2} (and \cite{GT1}, based on a combination of Chen's scalar curvature estimates \cite{strong_uniqueness} and the Schwarz-Pick-Ahlfors-Yau lemma) we know that the big-bang Ricci flow lies below any instantaneously complete Ricci flow, i.e. $2t\,g_{poin}\leq g(t)$ for $t>0$. Thus we have
\begin{equation}
\label{HUV}
2t H\leq U\leq V.
\end{equation}

We are free to make the assumption that $r_0\in (\half,1)$, 
i.e. $s_0:=-\log r_0\leq \log 2$, since reducing $r_0$ only makes \eqref{mainest} weaker, and we also assume that
$R\geq r_0^{1/3}$, i.e. that 
\begin{equation}
\label{Sassump}
S:=-\log R\leq \frac{1}{3}(-\log r_0)
=:\frac{1}{3}s_0.
\end{equation}
Because $\sinh(s)$ is convex for $s>0$, and hence $\sinh(s)\leq \sinh(\log 2)\frac{s}{\log 2}=\frac{3s}{4\log 2}$
for $s\in (0,\log 2)$, 
the assumption $s_0\leq \log 2$ allows us to combine \eqref{Hdef}
and \eqref{HUV} to give
\begin{equation}
\label{H2}
U^{-1}\leq \frac{Cs^2}{t}
\end{equation}
for $s\in (0,s_0)$ and universal $C$.

In order to prove Lemma \ref{keylemma}, we will consider the evolution of the difference of time-dependent weighted areas
\begin{equation}
\label{Jeq1}
\begin{aligned}
J(t)&:=\int_{D_R} \vph d\mu_{G(t)}-
\int_{D_R} \vph d\mu_{g(t)}\\
&=\int_{S^1}\int_{S}^\infty (V-U)\vph\, ds\,d\th,
\end{aligned}
\end{equation}
for an appropriate rotationally symmetric cut-off function $\vph:D\to [0,1]$ with support in $D_R$, and which is identically equal to $1$ on $D_{r_0}$. We will normally view $\vph$ as a function of $s$.
Later we will arrange, amongst other things, that $\vph(s)$ is smooth away from $s=S$ and $s=s_0/2$, and $C^1$ for all $s>0$.
Later when we assume that $g(0)=G(0)$, we will have $J(0)=0$. Thus Lemma \ref{keylemma}
reduces to controlling the quantity
$$J(t)\geq 
\int_{D_{r_0}} d\mu_{G(t)}-\int_{D_{r_0}} d\mu_{g(t)}=
\Vol_{G(t)}D_{r_0}-\Vol_{g(t)}D_{r_0}.$$
In order to do this, we differentiate \eqref{Jeq1} in time, using 
\eqref{2DRFeq2} to obtain
\begin{equation}
\label{dJdt1}
\begin{aligned}
\frac{dJ}{dt}&=
\int_{S^1}\int_{S}^\infty \pl{}{t}(V-U)\vph\, dsd\th\\
&=\int_{S^1}\int_{S}^\infty \lap(\log V-\log U)\vph\, dsd\th\\
&=\int_{S^1}\int_{S}^\infty (\log V-\log U)\vph''(s)\, dsd\th,
\end{aligned}
\end{equation}
where $\lap=\pll{}{s}+\pll{}{\th}$ and the integration by parts is valid because $V$ and $U$ have the same asymptotics at $s=\infty$.
Now, for $\la\in(0,1)$ and $x\geq 0$, we know that 
$\log(1+x)\leq \frac{1}{\la}x^\la$, and therefore with
$\la=\frac{\ga}{1+\ga}\in (0,\frac{1}{3})$ we have
$$\log V-\log U=\log(1+(V-U)/U)\leq 
\frac{1+\ga}{\ga}\left[\frac{V-U}{U}\right]^\frac{\ga}{1+\ga},$$
which we may insert  into \eqref{dJdt1}.
If we make the additional assumption that 
$\vph''(s)\leq 0$ for 
$s\in (s_0/2,s_0)$ (as we shall arrange below) then we have
\begin{equation}
\label{dJdt1b}
\frac{dJ}{dt}
\leq C(\ga)\int_{S^1}\int_{S}^{s_0/2} 
\left[\frac{V-U}{U}\right]^\frac{\ga}{1+\ga}
|\vph''(s)|\, dsd\th.
\end{equation}
and applying H\"older's inequality, we find that
\begin{equation}
\label{dJdt2}
\begin{aligned}
\frac{dJ}{dt}&\leq
C(\ga)
\left(\int_{S^1}\int_{S}^{s_0/2} 
[V-U]\vph\, dsd\th\right)^{\frac{\ga}{1+\ga}}
\left(\int_{S^1}\int_{S}^{s_0/2} 
U^{-\ga}|\vph''|^{1+\ga}\vph^{-\ga}\, dsd\th
\right)^{\frac{1}{1+\ga}}\\
&\leq C(\ga) J^\frac{\ga}{1+\ga}
\left(\int_{S}^{s_0/2} 
\left(\frac{s^2}{t}\right)^{\ga}|\vph''|^{1+\ga}\vph^{-\ga}\, ds
\right)^{\frac{1}{1+\ga}},
\end{aligned}
\end{equation}
where we have used \eqref{Jeq1}, \eqref{HUV} and \eqref{H2} in the last inequality.
Defining 
\begin{equation}
\label{Qdef}
Q:=\int_{S}^{s_0/2} 
s^{2\ga}|\vph''|^{1+\ga}\vph^{-\ga}\, ds,
\end{equation}
we  then have the main differential inequality
\begin{equation}
\label{mainODI}
\frac{d}{dt}J^{\frac{1}{1+\ga}}\leq C(\ga)
t^{-\frac{\ga}{1+\ga}}Q^\frac{1}{1+\ga}.
\end{equation}
We would like now to carefully choose the cut-off function $\vph$ so that we can be sure of good bounds on $Q$. This will allow us to integrate \eqref{mainODI} to finish the proof.
The function $\vph$ will be derived from the following specific function for which we compile some elementary facts.

\begin{prop}[Flux function]
\label{fluxprop}
For $a\in (0,1)$, the function $f:[a,1]\to\R$ defined by
\begin{equation}
\label{fdef}
f(\si)=\frac{1}{(-\log a)}\left[\si (\log \si-\log a)-(\si-a)\right]
\end{equation}
satisfies
\begin{compactenum}[(i)]
\item
$f(a)=0$,
\item
$\displaystyle f(1)=1-\frac{1-a}{(-\log a)}\in (0,1)$,
\item
$\displaystyle f'(\si)=\frac{1}{(-\log a)}\left[\log \si-\log a\right]\geq 0$,
\item
$f'(a)=0$,
\item
$f'(1)=1$,
\item
$\displaystyle f''(\si)=\frac{1}{\si(-\log a)}\geq 0$.
\end{compactenum}
In particular, we can extend $f$ to a $C^1$ function $f:\R\to [0,1]$, smooth except at $\si=a$ and $\si=1$, with the properties that 
\begin{compactenum}[(a)]
\item
$f(\si)=0$ for all $\si\leq a$,
\item
$f(\si)=1$ for all $\si\geq 2$, 
\item
$f''(\si)\leq 0$ for all $\si\in (1,2]$.
\end{compactenum}
\end{prop}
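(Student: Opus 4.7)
Properties (i)--(vi) are mechanical calculus from the explicit formula (\ref{fdef}). Differentiating $\si(\log\si - \log a) - (\si - a)$ by the product rule, the terms $\si\cdot(1/\si)$ and $-1$ cancel, leaving exactly the expression in (iii); items (i), (iv), (v) are then immediate substitutions in $f$ and $f'$, and (vi) is one further differentiation. For (ii), the upper bound $f(1)<1$ is immediate because $-\log a>0$ and $1-a>0$ for $a\in(0,1)$. The lower bound $f(1)>0$ amounts to the elementary inequality $-\log a>1-a$ on $(0,1)$, which I would verify by noting that $\psi(a):=-\log a-(1-a)$ satisfies $\psi(1)=0$ and $\psi'(a)=-1/a+1<0$ on $(0,1)$, so $\psi>0$ there.

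For the extension, the definition outside $[a,2]$ is forced: set $f\equiv 0$ on $(-\infty,a]$ and $f\equiv 1$ on $[2,\infty)$. The matching at $\si=a$ is $C^1$ by (i) and (iv), and merely $C^1$ because (vi) gives $f''(a^+)=1/(a(-\log a))\neq 0$, which is why $a$ appears in the list of non-smooth points. The essential step is therefore the construction on $[1,2]$ of a smooth interpolant $g$ with $g(1)=f(1)=1-\frac{1-a}{-\log a}$, $g'(1)=1$, $g(2)=1$, $g^{(k)}(2)=0$ for all $k\geq 1$ (so that gluing to $f\equiv 1$ is $C^\infty$ at $\si=2$), and $g''\leq 0$ throughout $(1,2]$.

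The existence of such a concave bridge is a standard bump-function construction, and the reason it is solvable is precisely (ii). Any concave $g$ on $[1,2]$ with $g'(1)=1$ and $g'(2)=0$ has $g'$ nonincreasing from $1$ to $0$, so $g(2)-g(1)=\int_1^2 g'\,d\si\in(0,1)$; conversely, any prescribed increment in $(0,1)$ can be realised by an appropriate such $g'$. Since our required increment $1-f(1)=\frac{1-a}{-\log a}$ lies in $(0,1)$ by (ii), a suitable $g'$ exists; concretely I would produce it by taking a fixed smooth nonincreasing bump $\rho:[1,2]\to[0,1]$ with $\rho(1)=1$, $\rho\equiv 0$ near $\si=2$ (and all derivatives vanishing there), and then rescaling horizontally, or forming a convex combination $c_1\rho+c_2\rho^k$, so as to integrate to the correct value; the intermediate value theorem supplies the parameters using (ii). The main (mild) obstacle is this bookkeeping, which depends on $a$; once the bridge is in place, properties (a)--(c) and the claimed smoothness follow by inspection.
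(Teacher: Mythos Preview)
Your argument is correct. The paper gives no proof of this proposition at all, presenting (i)--(vi) as a compilation of ``elementary facts'' and the extension as self-evident; your verification of (i)--(vi) is the obvious direct computation, and your analysis of the concave bridge on $[1,2]$---in particular the observation that the required increment $1-f(1)=\frac{1-a}{-\log a}$ lies in $(0,1)$ precisely by (ii), so that a smooth nonincreasing $g'$ from $1$ down to $0$ with the correct integral exists---makes explicit exactly what the paper suppresses.
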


\begin{figure}[h]
\centering
\includegraphics{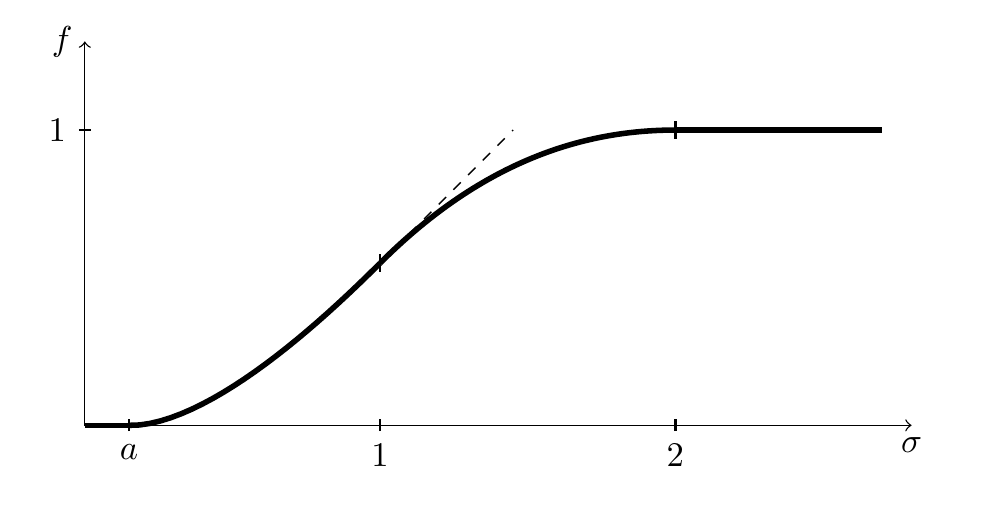}


\caption{The Flux function $f$.}
\end{figure}

The $C^1$ cut-off function $\vph: (0,\infty)\to [0,1]$ will arise as an appropriate scaling of $f$. Indeed, 
for our $S:=-\log R>0$ and $s_0:=-\log r_0<\infty$, with our standing assumption that $S\leq\frac{s_0}{3}$, we can then set $a=\frac{2S}{s_0}\in (0,1)$ and define
%
%
$$\vph(s)=f\left(\frac{2s}{s_0}\right),$$
which is smooth except at $s=S$ and $s=s_0/2$, and satisfies
\begin{compactenum}[(i)]
\item
$\vph(s)=0$ for $s\in (0,S]$,
\item
$\vph(s)=1$ for $s\geq s_0$,
\item
$\vph''(s)\leq 0$ for all $s\in (s_0/2,s_0)$.
\end{compactenum}

With this choice of $\vph$, we will be able to control $Q$ from \eqref{Qdef} according to:
\begin{prop}
\label{Qprop}
$$Q\leq \frac{C(\ga)}{
(-\log r_0)\left[\log(-\log r_0)-\log(-\log R)\right]^\ga}
$$
\end{prop}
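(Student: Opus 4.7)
The plan is to use the scaling $\varphi(s) = f(2s/s_0)$ to reduce $Q$ to a dimensionless integral in a single variable, which can then be estimated by splitting the region of integration near the zero of $f$ and far from it. All of the structure is concentrated in the function $f$, so once the scaling is executed properly, the remainder should reduce to elementary calculus in $\sigma$.

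First, using the chain rule together with the explicit formula $f''(\sigma) = 1/[\sigma(-\log a)]$ from part (vi) of Proposition \ref{fluxprop}, I would compute
$$|\varphi''(s)| = \frac{2}{s\, s_0\, (-\log a)}$$
on $(S, s_0/2)$ (away from the corners $s=S$ and $s=s_0/2$, which are a set of measure zero). Inserting this into the definition of $Q$ and making the change of variables $\sigma = 2s/s_0$ (so that the interval becomes $(a,1)$) eliminates the $s_0$-scaling and gives
$$Q = \frac{2}{s_0(-\log a)^{1+\gamma}} \int_a^1 \sigma^{\gamma-1}\, f(\sigma)^{-\gamma}\, d\sigma.$$
Next I would factor one $-\log a$ out of $f$ by writing $f(\sigma) = g(\sigma)/(-\log a)$ with $g(\sigma) := \sigma\log(\sigma/a) - \sigma + a$ (a Bregman-type divergence, vanishing to exactly second order at $\sigma=a$), and then further rescale $\tau = \sigma/a$; since $g(a\tau) = a\cdot h(\tau)$ with $h(\tau) := \tau\log\tau - \tau + 1$, the factor $a$ cancels throughout, producing
$$Q = \frac{2}{s_0(-\log a)} \int_1^{1/a} \tau^{\gamma-1}\, h(\tau)^{-\gamma}\, d\tau.$$

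The remaining task is to bound the last integral uniformly in $a$. Near $\tau=1$, Taylor expansion gives $h(\tau) = \tfrac12(\tau-1)^2 + O((\tau-1)^3)$, so $h(\tau)^{-\gamma}$ has a singularity of order $(\tau-1)^{-2\gamma}$; this is integrable precisely because $\gamma < \tfrac12$ (and this is exactly where the hypothesis $\gamma<\tfrac12$ is used). For large $\tau$, $h(\tau) \sim \tau\log\tau$, so the integrand is comparable to $\tau^{-1}(\log\tau)^{-\gamma}$, whose antiderivative $(\log\tau)^{1-\gamma}/(1-\gamma)$ contributes at most $C(\gamma)(-\log a)^{1-\gamma}$ at the upper endpoint $\tau = 1/a$. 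Combining the two regimes yields $\int_1^{1/a} \tau^{\gamma-1} h^{-\gamma}\, d\tau \leq C(\gamma)[1 + (-\log a)^{1-\gamma}]$, and hence
$$Q \leq \frac{C(\gamma)}{s_0\,(-\log a)^\gamma},$$
using that $-\log a \geq \log(3/2)$ from the standing assumption $S \leq s_0/3$.

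Finally, since $-\log a = \log(s_0/(2S)) = [\log(-\log r_0) - \log(-\log R)] - \log 2$, the harmless additive $\log 2$ can be absorbed as long as $R$ is close enough to $1$ (depending on $r_0$) that $\log(-\log r_0) - \log(-\log R) \geq 2\log 2$; this fixes the quantitative meaning of the lemma's hypothesis on $R$. With that arranged, $(-\log a)^{-\gamma} \leq 2^\gamma\,[\log(-\log r_0)-\log(-\log R)]^{-\gamma}$, and the claimed inequality follows after redefining $C(\gamma)$. The main subtlety is the numerical accounting: the power $(-\log a)^{1-\gamma}$ produced by the logarithmic divergence at $\tau=1/a$ must combine \emph{exactly} with the prefactor $1/(-\log a)$ to give the $(-\log a)^{-\gamma}$ in the target bound, so the choice of the exponent $\gamma/(1+\gamma)$ in the earlier application of H\"older--Young is not wasteful; meanwhile, the lower limit $\tau=1$ contributes a $\gamma$-dependent constant that is finite only because $\gamma<\tfrac12$, revealing that the range of admissible exponents in Lemma \ref{keylemma} is essentially sharp.
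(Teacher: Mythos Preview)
Your argument is correct and follows essentially the same route as the paper: after the change of variables $\sigma=2s/s_0$, the paper splits the integral at $\sigma=e^2 a$ and uses exactly your two lower bounds for the bracket (a quadratic lower bound near $\sigma=a$ via $\log(1+x)\le x-\tfrac{x^2}{2}$, and $\ge \tfrac{\sigma}{2}\log(\sigma/a)$ far from $a$), arriving at the same $Q\le C(\gamma)s_0^{-1}(-\log a)^{-\gamma}$; your further substitution $\tau=\sigma/a$ simply performs the paper's two local substitutions $\beta=\sigma/a$ and $\alpha=\log(\sigma/a)$ in one stroke. One small point: in the final step you impose the extra threshold $\log s_0-\log S\ge 2\log 2$, whereas under the paper's standing assumption $S\le s_0/3$ (i.e.\ $\log s_0-\log S\ge\log 3$) one can absorb the $-\log 2$ multiplicatively via $-\log a\ge(1-\tfrac{\log 2}{\log 3})(\log s_0-\log S)$, so no additional restriction on $R$ is needed.
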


Accepting this proposition for a moment, we can then integrate 
\eqref{mainODI} to give
\begin{equation*}
J^{\frac{1}{1+\ga}}(t)-J^{\frac{1}{1+\ga}}(0)
\leq C(\ga)
t^{\frac{1}{1+\ga}}
\left[\frac{1}{
(-\log r_0)\left[\log(-\log r_0)-\log(-\log R)\right]^\ga}
\right]^\frac{1}{1+\ga},
\end{equation*}
and we have proved:

\begin{lemma}
\label{stronger_lemma}
Suppose $r_0\in (\half, 1)$,
$R\in (r_0^{1/3},1)\subset(r_0,1)$ and $\gamma\in (0,\half)$.
Suppose $g(t)$ is any instantaneously complete Ricci flow on $D$, for $t\in [0,T]$, and $G(t)$ is any Ricci flow on $\overline {D_R}$ with the property that $G(t)\geq g(t)$
on $\overline {D_R}$ for $t\in [0,T]$.
Then for all $t\in [0,T]$, we have
\begin{equation}
\begin{aligned}
\left[\Vol_{G(t)}D_{r_0}-\Vol_{g(t)}D_{r_0}\right]^{\frac{1}{1+\ga}}
&\leq
\left[\Vol_{G(0)}D_{R}-\Vol_{g(0)}D_{R}\right]^{\frac{1}{1+\ga}}\\
&\quad+C(\ga)
\left[\frac{t}{
(-\log r_0)\left[\log(-\log r_0)-\log(-\log R)\right]^\ga}
\right]^{\frac{1}{1+\ga}}
.
\end{aligned}
\end{equation}
\end{lemma}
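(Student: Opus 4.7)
The plan is to combine the differential inequality \eqref{mainODI} with the bound on $Q$ promised in Proposition~\ref{Qprop}, interpreting the resulting bound on the weighted quantity $J(t)$ as a bound on the actual area difference on $D_{r_0}$.

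First I would record the two sandwich inequalities that connect $J$ to true areas. Since $\vph\equiv 1$ on $D_{r_0}$, $\vph\ge 0$ everywhere, and $V\ge U$ on $\overline{D_R}$, the definition \eqref{Jeq1} immediately gives
\[
J(t)\;\ge\;\int_{D_{r_0}}(V-U)\,ds\,d\th\;=\;\Vol_{G(t)}D_{r_0}-\Vol_{g(t)}D_{r_0}
\]
for every $t\in[0,T]$. At the initial time, because $\vph\le 1$ and $\vph$ is supported in $\overline{D_R}$, the same definition gives
\[
J(0)\;\le\;\Vol_{G(0)}D_R-\Vol_{g(0)}D_R.
\]
These two bookkeeping inequalities are what convert the weighted estimate into the statement of the lemma.

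The analytic heart is then already carried out in \eqref{Jeq1}--\eqref{mainODI}: an integration by parts in $s$ (using that $U,V$ agree in their asymptotics at $s=\infty$ and are annihilated by $\vph$ near $s=S$) throws the two spatial derivatives from $\log V-\log U$ onto $\vph$; the elementary bound $\log(1+x)\le \lambda^{-1}x^\lambda$ with $\lambda=\frac{\ga}{1+\ga}$ linearises the logarithm; Hölder's inequality with exponents $(1+\ga,\frac{1+\ga}{\ga})$ separates a factor that is exactly $J$ from a factor containing only $\vph''$ and $U^{-\ga}$; and finally the lower bound \eqref{HUV} from the big-bang subsolution gives $U^{-1}\le Cs^2/t$ via \eqref{H2}. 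The outcome is the ODI
\[
\frac{d}{dt}J^{\frac{1}{1+\ga}}(t)\;\le\;C(\ga)\,t^{-\frac{\ga}{1+\ga}}\,Q^{\frac{1}{1+\ga}},
\]
with $Q$ as in \eqref{Qdef}. Because $\frac{\ga}{1+\ga}<1$, the singular factor $t^{-\ga/(1+\ga)}$ is integrable on $[0,t]$, and direct integration yields
\[
J^{\frac{1}{1+\ga}}(t)-J^{\frac{1}{1+\ga}}(0)\;\le\;C(\ga)\,t^{\frac{1}{1+\ga}}\,Q^{\frac{1}{1+\ga}}.
\]
Inserting Proposition~\ref{Qprop} in the right-hand side and the two sandwich inequalities in the left-hand side produces exactly the estimate of Lemma~\ref{stronger_lemma}.

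The only genuine obstacle in this proof is the bound on $Q$, i.e.\ Proposition~\ref{Qprop}; everything else is either algebraic or a clean integration. The issue is that one needs a cut-off which simultaneously (a) is $C^1$ with $\vph(S)=0$, $\vph\equiv 1$ on $[s_0,\infty)$ and $\vph''\le 0$ on $(s_0/2,s_0)$ so that the argument leading to \eqref{dJdt1b} is valid, and (b) has $\vph''$ so small on $(S,s_0/2)$ that the weighted integral $\int_S^{s_0/2} s^{2\ga}|\vph''|^{1+\ga}\vph^{-\ga}\,ds$ decays in $R$ at the iterated-logarithmic rate appearing in \eqref{mainest}. The function $f$ of Proposition~\ref{fluxprop}, whose second derivative is $1/[\si(-\log a)]$, is engineered precisely so that after rescaling by $s_0/2$ one obtains $\vph''(s)\sim 1/[s(-\log a)]$ on $(S,s_0/2)$ with $a=2S/s_0$. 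Plugging this into the integrand collapses the $s^{2\ga}$ weight against $s^{-(1+\ga)}\vph^{-\ga}$, leaves an integrable factor in $s$, and produces the correct logarithmic denominator after carrying out the $s$-integration — this is where the $\log(-\log R)$ in the lemma is generated, and the place where any weaker cut-off would fail.
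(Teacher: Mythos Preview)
Your proposal is correct and follows essentially the same approach as the paper: derive the differential inequality \eqref{mainODI} for $J$, integrate it, feed in Proposition~\ref{Qprop}, and then use the two sandwich inequalities $J(t)\ge \Vol_{G(t)}D_{r_0}-\Vol_{g(t)}D_{r_0}$ and $J(0)\le \Vol_{G(0)}D_{R}-\Vol_{g(0)}D_{R}$ (the latter being left implicit in the paper) to pass from the weighted quantity $J$ to the actual volume differences. You also correctly identify that the real work lies in Proposition~\ref{Qprop}, which the paper proves separately.
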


In the setting of Lemma \ref{keylemma}, we also know that $G(0)=g(0)$, and so Lemma \ref{stronger_lemma} immediately implies Lemma \ref{keylemma}.

\section{Estimating $Q$}

It remains to prove Proposition \ref{Qprop}, estimating $Q$.
We begin by changing coordinates in \eqref{Qdef}, setting 
$\si=\frac{2s}{s_0}$ (keeping $a=\frac{2S}{s_0}$) so that
\begin{equation}
\begin{aligned}
\label{Q}
Q&=
\int_{a}^{1} 
\left(\frac{s_0\si}{2}\right)^{2\ga}\left(\left(\frac{s_0}{2}\right)^{-2}|f''(\si)|\right)^{1+\ga}f(\si)^{-\ga}\left( \frac{s_0}{2}\right)d\si\\
&=
\frac{2}{s_0}
\int_{a}^{1} 
\si^{2\ga}|f''|^{1+\ga}f^{-\ga}\,d\si\\
&=
\frac{2}{s_0}
\int_{a}^{1} 
\si^{\ga-1}(-\log a)^{-1}\left[\si (\log \si-\log a)-(\si-a)\right]^{-\ga}\,d\si,
\end{aligned}
\end{equation}
where we have used the definition of $f$ in \eqref{fdef} and Part (vi) of Proposition \ref{fluxprop}.
In order to proceed further, we must split the integral into a part on which $\si$ is of the order of $a$ (let us say for $\si\in (a,e^2a)$), and the remaining part where $\si$ is bounded away from $a$ (i.e. $\si\in (e^2a,1)$). Here we are assuming that $e^2a<1$; if not, we can estimate the whole range $\si\in (a,1)$ in one go, and the computation below simplifies a lot.

To deal with the range $\si\in (e^2a,1)$, we change variables from $\si$ to $\al:=\log\frac{\si}{a}$, so $\al\in (2,-\log a)$. Within this range we can estimate
\begin{equation*}
\begin{aligned}
\si (\log \si-\log a)-(\si-a)
&\geq\frac{\si}{2}(\log \si-\log a)
+\frac{\si}{2}\left(\log (e^2a)-\log a\right)-(\si-a)\\
&=\frac{\si}{2}\left(\log \frac{\si}{a}\right)+a\\
&\geq \frac{\si}{2}\left(\log \frac{\si}{a}\right).
\end{aligned}
\end{equation*}
Therefore, 
\begin{equation}
\begin{aligned}
\label{Q1}
Q_1&:=\frac{2}{s_0}
\int_{e^2a}^{1} 
\si^{\ga-1}(-\log a)^{-1}\left[\si (\log \si-\log a)-(\si-a)\right]^{-\ga}\,d\si\\
&\leq \frac{2}{s_0}\int_{e^2a}^{1} 
\si^{\ga-1}(-\log a)^{-1}
\left[
\frac{\si}{2}\left(\log \frac{\si}{a}\right)
\right]^{-\ga}\,d\si\\
&\leq
\frac{C(\ga)}{s_0}
(-\log a)^{-1}
\int_{e^2a}^{1} 
\si^{-1}
\left[\log \frac{\si}{a}
\right]^{-\ga}\,d\si\\
&\leq\frac{C(\ga)}{s_0}
(-\log a)^{-1}
\int_{2}^{-\log a} 
\al^{-\ga}\,d\al\\
&\leq\frac{C(\ga)}{s_0}
(-\log a)^{-\ga}.
\end{aligned}
\end{equation}

Meanwhile, to deal with the range $\si\in (a,e^2a)$, we recall that
for $x\in (-1,0]$, we have $\log(1+x)\leq x-\frac{x^2}{2}$, and therefore
$$\log a-\log \si
=\log \left(1+(\frac{a}\si-1)\right)\leq \left(\frac{a}\si-1\right)-\half\left(\frac{a}\si-1\right)^2$$
and hence
$$
\left[\si (\log \si-\log a)-(\si-a)\right]\geq \frac{1}{2\si}(\si-a)^2
$$
for $\si>a$. We may therefore estimate
\begin{equation}
\begin{aligned}
\label{Q2}
Q_2&:=
\frac{2}{s_0}
\int_{a}^{e^2a} 
\si^{\ga-1}(-\log a)^{-1}\left[\si (\log \si-\log a)-(\si-a)\right]^{-\ga}\,d\si\\
&\leq
\frac{C(\ga)}{s_0}
\int_{a}^{e^2a} 
a^{\ga-1}(-\log a)^{-1}\left[
\frac{1}{\si}(\si-a)^2
\right]^{-\ga}\,d\si\\
%
&= 
\frac{C(\ga)}{s_0}(-\log a)^{-1}
\int_{1}^{e^2} 
\be^{\ga}(\be-1)^{-2\ga}d\be\\
&\leq
\frac{C(\ga)}{s_0}(-\log a)^{-1}
\end{aligned}
\end{equation}
where we have changed variables from $\si$ to $\be=\frac{\si}{a}$, and used the fact that $\ga\in (0,\half)$.
Combining \eqref{Q}, \eqref{Q1} and \eqref{Q2}, we find that
\begin{equation}
\label{almostthere}
\begin{aligned}
Q&=Q_1+Q_2\leq \frac{C(\ga)}{s_0}\left[(-\log a)^{-1}
+(-\log a)^{-\ga}\right]\\
&\leq \frac{C(\ga)}{s_0}(-\log a)^{-\ga}
\end{aligned}
\end{equation}
because by \eqref{Sassump} we know that $-\log a\geq -\log \frac{2}{3}$.
Finally, we appeal again to \eqref{Sassump} to see that 
$\log{s_0}-\log{S}\geq \log 3$, and thus
$$-\log a=\log s_0-\log S-\log2\geq\left(1-\frac{\log 2}{\log 3}\right)\left(\log s_0-\log S\right).$$
When combined with \eqref{almostthere} we conclude that
$$Q\leq 
\frac{C(\ga)}{s_0}(\log s_0-\log S)^{-\ga},$$
as claimed in the proposition.

\section{Further refinements}

The same ideas as above also give the following improvement of 
Lemma \ref{stronger_lemma} in which we do not assume that the flows are ordered. 

\begin{lemma}
\label{even_stronger_lemma}
Suppose $r_0\in (\half, 1)$,
$R\in (r_0^{1/3},1)\subset(r_0,1)$ and $\gamma\in (0,\half)$.
Suppose $g(t)$ is any instantaneously complete Ricci flow on $D$, for $t\in [0,T]$, and $G(t)$ is any Ricci flow on $\overline {D_R}$. 
Defining, for $\Om\subset D_R$ the \emph{volume excess}
$$\VolEx_t \Om:=\Vol_{G(t)}\Om_t-\Vol_{g(t)}\Om_t,$$
where $\Om_t:=\{x\in\Om\ |\ G(t)\geq g(t)\text{ at }x\}$, 
we have for all $t\in [0,T]$ that
\begin{equation}
\begin{aligned}
\left[\VolEx_t D_{r_0}\right]^{\frac{1}{1+\ga}}
&\leq
\left[\VolEx_0 D_{R}\right]^{\frac{1}{1+\ga}}\\
&\quad+C(\ga)
\left[\frac{t}{
(-\log r_0)\left[\log(-\log r_0)-\log(-\log R)\right]^\ga}
\right]^{\frac{1}{1+\ga}}.
\end{aligned}
\end{equation}
\end{lemma}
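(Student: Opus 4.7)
The strategy is to repeat the proof of Lemma~\ref{stronger_lemma}, replacing the order-dependent weighted area difference by its positive part: define
$$J(t) := \int_{S^1}\int_S^\infty \vph(s)\,(V-U)_+ \, ds\, d\th,\qquad (V-U)_+ := \max(V-U,0).$$
Because $\vph\equiv 1$ on $\{s\geq s_0\}$ (i.e.\ on $D_{r_0}$) and is supported in $\{s\geq S\}$ (i.e.\ in $D_R$), and because $(V-U)_+$ is precisely the density whose integral produces the volume excess, one reads off
$$J(t) \geq \VolEx_t D_{r_0}\qquad\text{and}\qquad J(0) \leq \VolEx_0 D_R.$$
It therefore suffices to prove the same growth estimate on $J$ as in Section~\ref{proof_interior}.

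The one new ingredient is a Kato-type inequality, needed because $(V-U)_+$ is only Lipschitz. For a.e.\ $t$,
$$\frac{dJ}{dt} = \int \vph\, \mathbf{1}_{\{V>U\}}\, \lap w\, ds\, d\th,\qquad w:=\log V-\log U,$$
which is obtained by approximating $x_+$ by a $C^2$ convex $h_\ep$ with $h_\ep'\nearrow \mathbf{1}_{\{x>0\}}$ and $h_\ep''\geq 0$, and passing $\ep\to 0$. The pointwise identity
$$\lap h_\ep(w) = h_\ep'(w)\lap w + h_\ep''(w)|\nabla w|^2 \geq h_\ep'(w)\lap w,$$
paired with $\vph\geq 0$ and followed by integrations by parts in $s$ and $\th$ (boundary terms vanish: at $s=S$ by $\vph(S)=\vph'(S)=0$ from Proposition~\ref{fluxprop}(i),(iv); at $s=\infty$ because $w$ is bounded with $\partial_s w=O(e^{-s})$ while $\vph'\equiv 0$ for $s\geq s_0$; on $S^1$ by periodicity), yields in the limit
$$\frac{dJ}{dt} \leq \int_{S^1}\int_S^\infty w_+\, \vph''(s)\, ds\, d\th.$$

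From here the proof is a direct rerun of the steps from \eqref{dJdt1b} to \eqref{mainODI}. Since $\vph''\leq 0$ on $(s_0/2,s_0)$ and $\vph''$ vanishes outside $(S,s_0)$, the non-positive contribution may be dropped and the range restricted to $(S,s_0/2)$, where $\vph''\geq 0$. On $\{w>0\}$ (and trivially elsewhere),
$$w_+ = \log\!\Bigl(1+\frac{V-U}{U}\Bigr) \leq \frac{1+\ga}{\ga}\Bigl[\frac{(V-U)_+}{U}\Bigr]^{\ga/(1+\ga)}.$$
H\"older's inequality, the universal lower bound $U\geq 2tH$ from \eqref{HUV}, and the estimate \eqref{H2} then recover the differential inequality \eqref{mainODI} with the same $Q$ from \eqref{Qdef}; integrating and invoking Proposition~\ref{Qprop} gives the claim. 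The main obstacle is the Kato step together with the vanishing of boundary terms at $s=\infty$, where $w$ does not decay but merely tends to a $t$-dependent constant; both are handled cleanly by the $h_\ep$-regularisation and a dominated-convergence argument, after which the rest of the argument is automatic.
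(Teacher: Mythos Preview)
Your proposal is correct and follows essentially the same route as the paper. The paper's own proof is a single sentence: replace $J(t)$ by $\int (V-U)_+\vph$, note that one of the equalities in \eqref{dJdt1} becomes an inequality, and the rest of Section~\ref{proof_interior} goes through verbatim to give \eqref{mainODI}. Your write-up simply unpacks that ``one equality becomes an inequality'' as the Kato step $\int \vph\,\mathbf{1}_{\{w>0\}}\lap w \leq \int w_+\,\vph''$, justified via the convex regularisation $h_\ep$, and checks the boundary terms explicitly; this is exactly the content the paper is suppressing.
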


The only alteration to the proof in Section \ref{proof_interior} is to replace the definition of $J(t)$ in \eqref{Jeq1} by 
$$J(t):=\int_{S^1}\int_{S}^\infty (V-U)_+\vph\, ds\,d\th,$$
where $x_+:=\max\{x,0\}$.
One of the equalities in \eqref{dJdt1} becomes an inequality, but
we still obtain \eqref{mainODI} for this new $J$.

This improvement of the main estimate, together with the directly analogous estimates on $\C$ (as given in \cite{GT2} and \cite{RVE}) and $S^2$ (which is trivial because we can set $\vph\equiv 1$), gives the following alternative to Theorem \ref{mainthm}, which 
is the key to resolving the uniqueness issue for Ricci flows on general fixed surfaces starting with certain Alexandrov spaces, 
following on from Richard \cite{TR}, as we shall explain elsewhere \cite{CT}.

\begin{thm}
Suppose that $g(t)$ and $\bar g(t)$ are two conformally equivalent complete Ricci flows on a surface $\m$, defined for $t\in (0,T]$, both with curvature uniformly bounded from below, and with
\begin{equation}
\label{vol_hyp}
\lim_{t\downto 0}\Vol_{g(t)}\Om
=\lim_{t\downto 0}\Vol_{\bar g(t)}\Om<\infty
\end{equation}
for all $\Om\subset\subset\m$.
Then $g(t)=\bar g(t)$ for all $t\in (0,T]$.
\end{thm}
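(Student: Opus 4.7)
The plan is to apply the unordered interior area estimate Lemma \ref{even_stronger_lemma} on a time-shifted interval $[\tau,t]\subset(0,T]$ and to exploit the uniform lower curvature bound to drive the initial excess term to zero as $\tau\downto 0$.

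First I would reduce to the disc case by lifting both flows to the universal cover $\tilde{\m}$: completeness, the lower curvature bound, the conformal equivalence, and the volume hypothesis on precompact subsets all lift (the last by covering precompact sets in $\tilde{\m}$ by finitely many lifts of evenly-covered neighbourhoods in $\m$). By Uniformisation, $\tilde{\m}$ is conformally $S^2$, $\C$ or $D$; the $S^2$ and $\C$ cases are handled by the corresponding unordered estimates invoked in the paragraph preceding the theorem. In the remaining case $\tilde{\m}=D$, I would write both flows in the logarithmic polar coordinates of Section \ref{proof_interior} as $g(t)=U(ds^2+d\th^2)$ and $\bar g(t)=V(ds^2+d\th^2)$, and apply Lemma \ref{even_stronger_lemma} shifted to the time interval $[\tau,t]\subset(0,T]$ to obtain, for $r_0\in(\half,1)$ and $R\in(r_0^{1/3},1)$,
\begin{equation*}
\left[\VolEx_t D_{r_0}\right]^{\frac{1}{1+\ga}}
\leq\left[\VolEx_\tau D_{R}\right]^{\frac{1}{1+\ga}}
+C(\ga)\left[\frac{t-\tau}{(-\log r_0)\bigl[\log(-\log r_0)-\log(-\log R)\bigr]^{\ga}}\right]^{\frac{1}{1+\ga}}.
\end{equation*}

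The heart of the proof is then to show $\VolEx_\tau D_R\to 0$ as $\tau\downto 0$. Denoting by $-K_0$ the uniform lower curvature bound, the evolution equation $\pl{U}{t}=-2KU$ gives $\pl{U}{t}\leq 2K_0U$, so $e^{-2K_0t}U(t)$ is monotone non-increasing in $t$ and hence $U(t)$ increases pointwise as $t\downto 0$ to a limit function $U_0$. The hypothesis that $\Vol_{g(t)}\Om$ tends to a finite limit for each $\Om\subset\subset D$ together with monotone convergence forces $U_0\in L^1_{\mathrm{loc}}(D)$, and the pointwise bound $U(t)\leq e^{2K_0T}U_0$ on $(0,T]$ then upgrades the convergence to $U(t)\to U_0$ in $L^1_{\mathrm{loc}}$ by dominated convergence. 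Running the same argument for $\bar g$ gives $V(t)\to V_0$ in $L^1_{\mathrm{loc}}$, and the matching of the two volumetric limits in the hypothesis forces $U_0=V_0$ almost everywhere; consequently
\begin{equation*}
\VolEx_\tau D_R \leq \int_{D_R}|V(\tau)-U(\tau)|\,ds\,d\th\longrightarrow 0 \quad\text{as } \tau\downto 0.
\end{equation*}

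Sending $\tau\downto 0$ and then $R\nearrow 1$ in the displayed estimate yields $\VolEx_t D_{r_0}=0$ for all $t\in(0,T]$, i.e.\ $V\leq U$ on $D_{r_0}$; the symmetric argument with the roles of $g$ and $\bar g$ swapped gives the reverse inequality, and then $r_0\nearrow 1$ completes the proof. I expect the third paragraph to be the main obstacle: upgrading the weak volumetric convergence supplied by the hypothesis to the $L^1_{\mathrm{loc}}$ convergence of conformal factors needed to kill $\VolEx_\tau D_R$. The monotonicity trick relies crucially on the uniform lower curvature bound; without it the conformal factors could a priori oscillate in time, so that $(V-U)_+$ might persist even though $\int(V-U)$ dies.
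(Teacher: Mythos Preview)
Your proposal is correct and follows essentially the same route as the paper: reduce to the universal cover, use the lower curvature bound to make $e^{-2K_0t}U(t)$ monotone and hence obtain $L^1_{\mathrm{loc}}$ convergence of the conformal factors to a common limit, and then apply the unordered estimate (Lemma \ref{even_stronger_lemma}) on shifted intervals, letting $\tau\downto 0$, $R\upto 1$, $r_0\upto 1$ and symmetrising. The only slip is the phrase ``$U(t)$ increases pointwise as $t\downto 0$'': what is monotone is $e^{-2K_0t}U(t)$, not $U(t)$ itself, but this is harmless since the limits coincide and your subsequent dominated convergence step is unaffected.
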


\begin{proof}
By lifting both flows to the universal cover of $\m$, we may assume that $\m$ is either $S^2$, $D$ or $\C$. The $S^2$ case is effectively covered by the work of Richard \cite{TR}. The $D$ case will be given below, using Lemma \ref{even_stronger_lemma}. The $\C$ case will then be identical, except that one replaces Lemma \ref{even_stronger_lemma} by the analogous estimates from \cite{GT2} based on the estimate of Rodriguez, V\'azquez and Esteban \cite{RVE}, and we omit it.

Let us write $g(t)=U(dx^2+dy^2)$ and $\bar g(t)=V(dx^2+dy^2)$ on the disc $D$. By scaling both flows (multiplying each by a large positive constant, and scaling time by the corresponding factor) we may assume that the curvatures of each flow are bounded below by $-1$. By \eqref{2DRFeq}, we then know that the conformal factors cannot increase very fast, i.e.
$$\pl{U}{t}\leq 2U,$$ 
and $e^{-2t}U$ is monotonically decreasing in $t$.
By \eqref{vol_hyp}, for all $\Om\subset\subset D$, we know that $U$ and thus $e^{-2t}U$ are uniformly bounded in $L^1(\Om)$ as $t\downto 0$, and thus by the monotone convergence theorem, we have  convergence of 
$e^{-2t}U$ and thus $U$, in $L^1(\Om)$ to some limit function 
$W\in L^1_{loc}(D)$ as $t\downto 0$. The same argument may be applied to $V$ to give, by \eqref{vol_hyp}, the same limit $W$.
In particular, we have 
\begin{equation}
\label{VUL1}
\int_\Om |V-U|dxdy \to 0\qquad\text{ as }t\downto 0.
\end{equation}

We may now apply Lemma \ref{even_stronger_lemma} to the flows
$g(t+\ep)$ and $\bar g(t+\ep)$ for $\ep\in (0,T)$. 
This tells us that
for all $r_0\in (\half, 1)$, $R\in (r_0^{1/3},1)\subset(r_0,1)$,
$\gamma\in (0,\half)$ and $0<\ep<t$, we have
\begin{equation}
\begin{aligned}
\lefteqn{\left[\int_{D_{r_0}}|V-U|_+dxdy\bigg|_{t} \right]^{\frac{1}{1+\ga}}
-\left[\int_{D_{R}}|V-U|_+dxdy\bigg|_{\ep} \right]^{\frac{1}{1+\ga}}}&\\
&\hspace{3cm}=
\left[\VolEx_{t-\ep} D_{r_0}\right]^{\frac{1}{1+\ga}}
-\left[\VolEx_0 D_{R}\right]^{\frac{1}{1+\ga}}\\
&\hspace{3cm}\leq C(\ga)
\left[\frac{t-\ep}{
(-\log r_0)\left[\log(-\log r_0)-\log(-\log R)\right]^\ga}
\right]^{\frac{1}{1+\ga}}.
\end{aligned}
\end{equation}
By \eqref{VUL1}, when we let $\ep\downto 0$, we obtain 
$$\int_{D_{r_0}}|V-U|_+dxdy\bigg|_{t}
\leq 
\frac{C(\ga)\,t}{
(-\log r_0)\left[\log(-\log r_0)-\log(-\log R)\right]^\ga}
.$$
We may now take the limit $R\upto 1$ and then $r_0\upto 1$
to conclude that $V\leq U$. By repeating the same argument with $U$ and $V$ switched, we find that $g(t)=\bar g(t)$ as desired.
\end{proof}

\emph{Acknowledgements:} Thanks to Gregor Giesen for conversations on this topic. This work was supported by EPSRC grant number EP/K00865X/1.

{\sc mathematics institute, university of warwick, coventry, CV4 7AL,
uk}\\
\url{http://www.warwick.ac.uk/~maseq}
\end{document}